\documentclass[a4paper,11pt]{article}

\usepackage{amsmath,amsthm,amssymb,mathrsfs,latexsym,amsfonts}
\usepackage{graphicx,psfrag,epsfig}
\usepackage{bbm}
\usepackage[english]{babel}
\usepackage[latin1]{inputenc}
\usepackage{a4wide}

\newtheorem{theorem}{Theorem}

\newtheorem{proposition}{Proposition}

\newcounter{paraga}[section]

\begin{document}

\def\MP{\,{<\hspace{-.5em}\cdot}\,}
\def\SP{\,{>\hspace{-.3em}\cdot}\,}
\def\PM{\,{\cdot\hspace{-.3em}<}\,}
\def\PS{\,{\cdot\hspace{-.3em}>}\,}
\def\EP{\,{=\hspace{-.2em}\cdot}\,}
\def\PP{\,{+\hspace{-.1em}\cdot}\,}
\def\PE{\,{\cdot\hspace{-.2em}=}\,}
\def\N{\mathbb N}
\def\C{\mathbb C}
\def\Q{\mathbb Q}
\def\R{\mathbb R}
\def\T{\mathbb T}
\def\A{\mathbb A}
\def\Z{\mathbb Z}
\def\demi{\frac{1}{2}}

\begin{titlepage}
\author{Abed Bounemoura~\footnote{CNRS-IMPA UMI 2924  (abedbou@gmail.com)}}
\title{\LARGE{\textbf{Ergodization time for linear flows on tori via geometry of numbers}}}
\end{titlepage}

\maketitle

\begin{abstract}
In this paper, we give a new, short, simple and geometric proof of the optimal ergodization time for linear flows on tori. This result was first obtained by Bourgain, Golse and Wennberg in \cite{BGW98} using Fourier analysis. Our proof uses geometry of numbers: it follows trivially from a Diophantine duality that was established by the author and Fischler in \cite{BF13}.  
\end{abstract}

\bigskip
\bigskip

Let $n \geq 2$ be an integer, $\T^n:=\R^n / \Z^n$, $\alpha \in \R^n\setminus\{0\}$ and consider the linear flow on $\T^n$ defined by
\[ X_\alpha^t(\theta)=\theta+t\alpha, \quad t \in \R, \quad \theta \in \T^n.  \]
It is just the flow of the constant vector field $X_\alpha=\alpha$ on $\T^n$. Such flows play an important role in Hamiltonian systems, and their dynamical properties depend on the Diophantine properties of the vector $\alpha$, as we will recall now.

Let us say that a vector subspace of $\R^n$ is rational if it admits a basis of vectors with rational components. We define $F_\alpha$ to be the smallest rational subspace of $\R^n$ containing $\alpha$, so that $\Lambda_\alpha:=F_\alpha \cap \Z^n$ is a lattice in $F_\alpha$. In the special case where $F_\alpha=\R^n$, we have $\Lambda_\alpha=\Z^n$ and the vector $\alpha$ is said to be non-resonant: it is an elementary fact that the flow $X_\alpha^t$ is minimal (all orbits are dense) and in fact uniquely ergodic (all orbits are uniformly distributed with respect to Haar measure). In the general case where $F_\alpha$ has dimension $d$ with $1 \leq d \leq n$, choosing a complementary subspace $E_\alpha$ of $F_\alpha$, the affine foliation
\[ \R^n=\bigsqcup_{v \in E_\alpha}v+F_\alpha \]
induces a foliation on $\T^n$ such that each leaf, which is just a translate of the $d$-dimensional torus $\mathcal{T}^d_\alpha:=F_\alpha/\Lambda_\alpha$, is invariant by the flow and the restriction of the latter is minimal and uniquely ergodic.

A natural question is the following. Given some $T>0$, let 
\begin{equation}\label{piece}
\mathcal{O}_\alpha^T:=\bigcup_{0 \leq t \leq T}X_\alpha^t(0) \subset \mathcal{T}_\alpha^d 
\end{equation}
be a finite piece of orbit starting at the origin. As $T$ goes to infinity, $\mathcal{O}_\alpha^T$ fills the torus $\mathcal{T}_\alpha^d$, hence given any $\delta>0$, there exists a smallest positive time $T_\alpha(\delta)$ such that $\mathcal{O}_\alpha^{T_\alpha(\delta)}$ is a $\delta$-dense subset of $\mathcal{T}_\alpha^d$ (for a fixed metric on $\mathcal{T}_\alpha^d$ induced by a choice of a norm on $F_\alpha$). This time $T_\alpha(\delta)$ is usually called the $\delta$-ergodization time. Note that in~\eqref{piece} we chose the initial condition $\theta_0=0$; yet it is obvious that choosing a different $\theta_0$ (and consequently a different leaf of the foliation) lead to the same ergodization time. Then the question is to estimate this time $T(\delta)$ as a function of $\delta$.

To do so, let us define the function
\begin{equation}\label{funcpsi}
\Psi_{\alpha}(Q):=\max\{|k \cdot \alpha|^{-1} \; | \; k \in \Lambda_\alpha, \: 0<|k|\leq Q \},
\end{equation}
where, if $k=(k_1,\dots,k_n)$ and $\alpha=(\alpha_1,\dots,\alpha_n)$,
\[ k\cdot\alpha=k_1\alpha_1+\cdots+k_n\alpha_n, \quad |k|=\max\{|k_1|,\dots,|k_n|\}. \]
The function $\Psi$ in~\eqref{funcpsi} is defined for $Q \geq Q_\alpha$, where $Q_\alpha \geq 1$ is the length of the shortest non-zero vector in $\Lambda_\alpha$, that is
\[ Q_{\alpha}:=\inf\{|k| \; | \; k\in \Lambda_\alpha\setminus\{0\}\} \]
which depends only on the lattice. Another lattice constant is the co-volume $|\Lambda_\alpha|$ of $\Lambda_\alpha$, which is the $d$-dimensional volume of a fundamental domain of $\mathcal{T}_\alpha^d=F_\alpha/\Lambda_\alpha$, and let us write
\[ C_\alpha:=|\Lambda_\alpha|^2. \]     
Without loss of generality, we may assume that the vector $\alpha$ has a component equals to one; if not, just divide $\alpha$ by $|\alpha|$, and changing its sign if necessary, one just needs to consider $T_\alpha(\delta)/|\alpha|$ instead of $T_\alpha(\delta)$.

We can now state our main result.

\begin{theorem}\label{thm1}
Let $\delta>0$ such that $\delta \leq d^2((n+2)Q_\alpha)^{-1}$. Then we have the inequality
\[ T_\alpha(\delta) \leq C_{d,\alpha}\Psi\left(2C_{d,\alpha}\delta^{-1}\right), \quad C_{d,\alpha}:=d^2d!C_\alpha. \]
\end{theorem}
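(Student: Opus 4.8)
\emph{Proof proposal.} The plan is to treat the problem on a single leaf $\mathcal{T}_\alpha^d=F_\alpha/\Lambda_\alpha$, where the flow is minimal, and to read the $\delta$-ergodization time as the generalized inverse of the \emph{dispersion} of the orbit. Define
\[
\Delta_\alpha(T):=\sup_{\theta\in\mathcal{T}_\alpha^d}\ \inf_{0\le t\le T}\ \mathrm{dist}\bigl(\theta,\,t\alpha\bigr),
\]
so that $\mathcal{O}_\alpha^T$ is $\delta$-dense exactly when $\Delta_\alpha(T)\le\delta$, and $T_\alpha(\delta)=\inf\{T>0:\Delta_\alpha(T)\le\delta\}$. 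With this reformulation, proving the theorem amounts to showing that $\Delta_\alpha(T)\le\delta$ as soon as $T\ge C_{d,\alpha}\,\Psi_\alpha\bigl(2C_{d,\alpha}\delta^{-1}\bigr)$. I would first record that the metric on $\mathcal{T}_\alpha^d$ lifts to $F_\alpha$, so that bounding $\Delta_\alpha(T)$ is a covering (inhomogeneous approximation) question for the pair $(\Lambda_\alpha,\alpha)$: every point must be reachable, up to $\delta$ and up to $\Lambda_\alpha$, by the segment $\{t\alpha:0\le t\le T\}$.

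The heart of the matter is that this \emph{inhomogeneous} covering problem is dual to the \emph{homogeneous} small-linear-form problem measured by $\Psi_\alpha$. Heuristically, if some ball $B(\theta^*,\delta)$ is missed by $\mathcal{O}_\alpha^T$, then the segment together with all its $\Lambda_\alpha$-translates leaves an empty cylinder of transverse radius $\delta$ about the line $\R\alpha$ through $\theta^*$; a Minkowski-type convexity argument then forces a nonzero $k\in\Lambda_\alpha$ that is short in the transverse directions, $|k|\le 2C_{d,\alpha}\delta^{-1}$, and almost orthogonal to $\alpha$ in the sense that $|k\cdot\alpha|^{-1}\ge C_{d,\alpha}^{-1}T$. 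By the very definition of $\Psi_\alpha$ this yields $\Psi_\alpha(2C_{d,\alpha}\delta^{-1})\ge C_{d,\alpha}^{-1}T$, i.e. $T\le C_{d,\alpha}\Psi_\alpha(2C_{d,\alpha}\delta^{-1})$, which is exactly the contrapositive of what we want.

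Rather than carry out this transference by hand, I would invoke the Diophantine duality of \cite{BF13}, which packages precisely this equivalence between the covering function $\Delta_\alpha$ and the small-value function $\Psi_\alpha$, with explicit dimension-only constants. The arithmetic of the constant is geometric: Minkowski's second theorem contributes the factorial $d!$ through the product of the successive minima, the passage between the primal and inhomogeneous sides contributes the co-volume squared $C_\alpha=|\Lambda_\alpha|^2$, and a factor $d^2$ absorbs the loss from comparing the sup-norm to the Euclidean geometry across the $d$ directions; together these give $C_{d,\alpha}=d^2d!\,C_\alpha$. Substituting $T=C_{d,\alpha}\Psi_\alpha(2C_{d,\alpha}\delta^{-1})$ into the duality then returns $\Delta_\alpha(T)\le\delta$ directly. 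The hypothesis $\delta\le d^2\bigl((n+2)Q_\alpha\bigr)^{-1}$ serves only to place $\delta$ in the admissible regime: it guarantees $2C_{d,\alpha}\delta^{-1}\ge Q_\alpha$, so that $\Psi_\alpha$ is defined at the required argument and the clean (small-$\delta$) form of the duality applies.

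The main obstacle is therefore entirely concentrated in the transference itself: converting the statement ``every point of $\mathcal{T}_\alpha^d$ is $\delta$-approximated by a time-$T$ orbit'' into the statement ``$k\cdot\alpha$ is bounded below on the box $0<|k|\le 2C_{d,\alpha}\delta^{-1}$'' with constants depending only on $d$ and on the lattice invariants $Q_\alpha,|\Lambda_\alpha|$. This inhomogeneous-to-homogeneous passage, with sharp tracking of the co-volume and of the successive minima, is exactly what \cite{BF13} supplies; granting it, the theorem follows, as the abstract promises, by a one-line substitution. Accordingly, I would devote the write-up to verifying that the hypotheses of the duality of \cite{BF13} are met here and that its constants compose to precisely $C_{d,\alpha}$, rather than to re-deriving the duality.
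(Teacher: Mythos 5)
There is a genuine gap: the duality you invoke is not the one that \cite{BF13} (or Proposition~\ref{prop} of this paper) actually provides, and the step you delegate to it is precisely the step you were supposed to supply. The result imported from \cite{BF13} is a \emph{homogeneous} statement about simultaneous rational approximation: for $Q\geq (n+2)Q_\alpha$ there exist rational vectors $\omega_1,\dots,\omega_d$ with denominators $q_j\leq dd!C_\alpha\Psi(2d!C_\alpha Q)$, with $|\alpha-\omega_j|\leq d(q_jQ)^{-1}$, and --- crucially --- with $q_1\omega_1,\dots,q_d\omega_d$ a \emph{basis of} $\Lambda_\alpha$. It says nothing directly about the covering function $\Delta_\alpha$ or about an ``equivalence between $\Delta_\alpha$ and $\Psi_\alpha$''; no such packaged inhomogeneous duality is being cited. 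Your proposed contrapositive --- a missed $\delta$-ball yields an empty cylinder around $\R\alpha$, hence by a ``Minkowski-type convexity argument'' a nonzero $k\in\Lambda_\alpha$ with $|k|\leq 2C_{d,\alpha}\delta^{-1}$ and $|k\cdot\alpha|^{-1}\geq C_{d,\alpha}^{-1}T$ --- is a plausible transference strategy (close in spirit to the Kronecker-type transference arguments and to the Fourier-analytic proofs of \cite{BGW98}), but you neither carry it out nor track how the constants $d!$, $C_\alpha=|\Lambda_\alpha|^2$ and $d^2$ would actually arise from it; the paragraph explaining where each factor ``comes from'' is reverse-engineered from the answer rather than derived. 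As written, the proposal proves nothing.

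The missing bridge in the paper's actual argument is short but essential: taking $Q=d^2\delta^{-1}$ (the hypothesis on $\delta$ ensures $Q\geq(n+2)Q_\alpha$, i.e.\ that Proposition~\ref{prop} applies --- not merely that $\Psi$ is defined at the required argument, as you suggest), the basis property $(ii)$ makes $\{t_1q_1\omega_1+\cdots+t_dq_d\omega_d\,:\,t_j\in[0,1[\}$ a fundamental domain of $\mathcal{T}_\alpha^d$. Any target $\theta^*=\sum_j t_j^*q_j\omega_j$ is then approximated by the orbit point at the \emph{explicit} time $T^*=\sum_j t_j^*q_j$, with error $\sum_j|t_j^*q_j\alpha-t_j^*q_j\omega_j|\leq d^2Q^{-1}=\delta$ and with $T^*\leq\sum_j q_j\leq C_{d,\alpha}\Psi(2C_{d,\alpha}\delta^{-1})$. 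This constructive reduction to $d$ periodic flows is the content of the proof; if you want to pursue your cylinder/transference route instead, you would have to prove that transference from scratch, which is a different and substantially harder undertaking.
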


Even though, up to our knowledge, this result haven't been stated and proved in this generality, it is not essentially new; the novelty lies in its proof.

But first let us recall the previous results, which were dealing only with the case $d=n$ (in this case, one has $Q_\alpha=C_\alpha=1$ and there is no restriction on $0<\delta \leq 1$). Assuming moreover that $\alpha$ satisfies the following Diophantine condition:
\[ |k\cdot\alpha| \geq \gamma|k|^{-\tau}, \quad k \in \Z^n\setminus\{0\}, \quad \gamma>0, \quad \tau\geq n-1, \]
the above result reads
\[ T_\alpha(\delta) \lesssim \delta^{-\tau}. \]
This result, but with the exponent $\tau$ replaced by the worse exponent $\tau+n$, was first established in~\cite{CG94}, where it was used in the problem of instability of Hamiltonian systems close to integrable (Arnold diffusion). This was then slightly improved in \cite{Dum91} to the value $\tau+n/2$; see also \cite{DDG96} where this ergodization time is shown to be closely related to problems in statistical physics. The estimate with the exponent $\tau$ was eventually obtained in~\cite{BGW98}, and then later the more general statement (without assuming $\alpha$ to be Diophantine) was obtained in~\cite{BBB03}. For a survey on the results and applications of this ergodization time, we refer to~\cite{Dum99}. 

All these proofs are based on Fourier analysis, and it is our purpose to offer a new proof, which is geometric and rather simple. First let us observe that there is a trivial case, namely when $d=1$. In this case, writing $\alpha=\omega$, the vector is in fact rational, that is $q\omega \in \Z^n$ for some minimal integer $q\geq 1$, and obviously $T_\omega(\delta)=q$ for any $0 \leq \delta \leq 1$: in fact, the orbits of the linear flow $X_\omega^t$ are all periodic of period $q$ so for $T=q$, one has the equality $\mathcal{O}_\omega^T=\mathcal{T}^1_\omega$. Our proof will essentially reduce the general case to the trivial case: the proposition below shows that in general the linear flow $X_\alpha^t$ can be approximated by $d$ periodic flows $X_{\omega_j}^t$ with periods $q_j$, $1 \leq j \leq d$, and such that the vectors $q_j\omega_j \in \Z^n$ span the lattice $\Lambda_\alpha$. Here's a precise statement.

\begin{proposition}\label{prop}
Let $Q \geq (n+2)Q_\alpha$. Then there exist $d$ rational vectors $\omega_1, \dots, \omega_d$ in $\Q^n$, of denominators $q_1, \dots, q_d$, such that:
\begin{itemize}
\item[$(i)$] for all $1 \leq j \leq d$, $|\alpha-\omega_j|\leq d(q_jQ)^{-1}$ and $1 \leq q_j \leq dd!C_\alpha \Psi(2d!C_\alpha Q)$;
\item[$(ii)$] the integer vectors $q_1\omega_1, \dots, q_d\omega_d$ form a basis for the lattice $\Lambda_\alpha$.
\end{itemize}
\end{proposition}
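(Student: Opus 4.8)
The plan is to read both conclusions off a single statement about simultaneous Diophantine approximation inside $F_\alpha$. Working in $F_\alpha\cong\R^d$ with its lattice $\Lambda_\alpha$, I first note that writing $\omega_j=p_j/q_j$ with $p_j:=q_j\omega_j\in\Lambda_\alpha$ turns the estimate $|\alpha-\omega_j|\le d(q_jQ)^{-1}$ in $(i)$ into $|q_j\alpha-p_j|\le dQ^{-1}$. So the task is to produce $d$ integers $q_j\ge1$ and $d$ lattice vectors $p_j\in\Lambda_\alpha$ approximating the ray $\R\alpha$ to within $dQ^{-1}$, with each denominator bounded by $dd!C_\alpha\Psi(2d!C_\alpha Q)$, and such that $p_1,\dots,p_d$ is a basis of $\Lambda_\alpha$, which is $(ii)$.

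The geometric vehicle I would use is the rank-$(d+1)$ lattice
\[ L:=\{(p-q\alpha,\ \varepsilon q)\;:\;p\in\Lambda_\alpha,\ q\in\Z\}\subset F_\alpha\times\R\cong\R^{d+1}, \]
with a scaling $\varepsilon$ tuned against $Q$ so as to balance the approximation error against the size of the denominator. A short vector of $L$ with nonzero last coordinate is precisely a good rational approximation $p/q$ of $\alpha$, while the short vectors of the dual lattice $L^\ast$ are governed by the small values of the linear form $k\cdot\alpha$, which is exactly the quantity measured by $\Psi_\alpha$. The duality of~\cite{BF13} is the explicit transference between these two sides: an upper bound on $\Psi$, i.e. a lower bound on $\min_{0<|k|\le Q}|k\cdot\alpha|$, forces short vectors of $L$ to exist, hence approximations $(p_j,q_j)$ with the advertised control on $q_j$. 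This is where the covolume enters squared, $C_\alpha=|\Lambda_\alpha|^2$ --- one factor from the covolume of $L$, the reciprocal factor from that of $L^\ast$ --- and where tuning $\varepsilon$ against $Q$ produces the rescaled argument $2d!C_\alpha Q$.

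To obtain all $d$ approximations at once, and in particular to reach the basis property, I would argue with the successive minima of $L$ (equivalently, with a reduced basis) rather than a single shortest vector. Taking $p_1,\dots,p_d$ along the first $d$ minima secures their $\R$-linear independence; the factor $d!$ in $C_{d,\alpha}$ is what Minkowski's second theorem supplies when the product of the successive minima is traded for the covolume, and the loose factors $d$ come from comparing the sup-norm $|\cdot|$ with the Euclidean norm used to build $L$. The denominator bound is then uniform in $j$ because it is controlled by the largest of the relevant minima, which transference pins to $\Psi(2d!C_\alpha Q)$.

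The genuine obstacle is the passage from $\R$-linear independence to the full basis condition $(ii)$: a priori a counting or successive-minima argument only yields vectors spanning a finite-index sublattice of $\Lambda_\alpha$, and one must rule out a nontrivial index. I expect to settle this through the determinant: expanding $\det(p_1,\dots,p_d)$ multilinearly in $p_j=q_j\alpha+e_j$, every term containing two or more copies of the parallel vector $\alpha$ vanishes, and the surviving terms --- weighted by the small errors $e_j=p_j-q_j\alpha$, of norm $\le dQ^{-1}$ --- should pin the determinant to $\pm|\Lambda_\alpha|$ exactly rather than to a proper multiple, so that the $p_j$ skip no primitive vector. If instead the duality of~\cite{BF13} is invoked in the form that already outputs a reduced basis of $L$, this step becomes automatic and the proposition does follow at once, the only residual work being the bookkeeping of the constants $d$, $d!$ and $C_\alpha$.
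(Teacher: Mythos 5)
Your overall strategy---encoding rational approximations of $\alpha$ as short vectors of an auxiliary lattice and transferring a bound on $\Psi$ (that is, a lower bound for $|k\cdot\alpha|$ over $0<|k|\le Q$) into the existence of such short vectors via the duality of successive minima---is consistent with the route the paper indicates: the paper does not reprove the Proposition but cites \cite{BF13} (Theorem 2.1 and Proposition 2.3), naming the transference inequality $1\le\lambda_k(\mathcal{C},\Lambda)\lambda_{d+1-k}(\mathcal{C}^*,\Lambda^*)\le d!$ as the only ingredient. Note, though, that the factor $d!$ in the constants comes from this Mahler-type inequality, applied at every index $k$ and not only to a shortest vector, rather than from Minkowski's second theorem as you suggest; and your sketch never actually carries out the bookkeeping that produces the bound $q_j\le dd!C_\alpha\Psi(2d!C_\alpha Q)$.

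The genuine gaps are in the two delicate steps. First, linear independence of the first $d$ minima vectors $(p_j-q_j\alpha,\varepsilon q_j)$ of the rank-$(d+1)$ lattice $L$ does not by itself give linear independence of $p_1,\dots,p_d$ in $F_\alpha$ (a relation $\sum_j c_jp_j=0$ with $\sum_j c_jq_j\neq0$ is not excluded), nor does it guarantee $q_j\ge 1$; both require separate arguments. Second, and more seriously, your determinant argument for the basis property $(ii)$ fails. Writing $p_j=q_j\alpha+e_j$ with $|e_j|\le dQ^{-1}$, the multilinear expansion gives $\det(p_1,\dots,p_d)=\det(e_1,\dots,e_d)+\sum_{j}q_j\det(e_1,\dots,\alpha,\dots,e_d)$, and the $j$-th cross term is of size $q_j\cdot O(Q^{-(d-1)})$; since $q_j$ is only bounded by $dd!C_\alpha\Psi(2d!C_\alpha Q)$, and $\Psi$ grows faster than any power of $Q$ for Liouville-type $\alpha$, nothing pins the determinant near $\pm|\Lambda_\alpha|$, so the index of the sublattice spanned by the $p_j$ is not controlled this way. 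The missing idea is the classical lemma that $d$ linearly independent lattice vectors realizing the successive minima of a convex body can be exchanged for a basis of the lattice whose gauges increase by a factor at most $\max(1,j/2)\le d$; this loss is precisely the extra factor $d$ appearing in $|\alpha-\omega_j|\le d(q_jQ)^{-1}$ and in $q_j\le dd!C_\alpha\Psi(2d!C_\alpha Q)$. Without that step (or an equivalent), your construction only yields a finite-index sublattice of $\Lambda_\alpha$, which is not sufficient for the fundamental-domain argument used in the proof of Theorem~\ref{thm1}.
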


This Proposition was proved in \cite{BF13}, see Theorem 2.1 and Proposition 2.3. The only ingredient used there is the following well-known transference result in geometry of numbers (see \cite{Cas} for instance): if $\mathcal{C}$ and $\Lambda$ are respectively a convex body and a lattice in a Euclidean space of dimension $d$, and if $\mathcal{C}^*$ and $\Lambda^*$ denote their dual, then
\[ 1 \leq \lambda_k(\mathcal{C},\Lambda)\lambda_{d+1-k}(\mathcal{C}^*,\Lambda^*)\leq d!, \quad 1 \leq k \leq d \] 
where $\lambda_k(\mathcal{C},\Lambda)$ is the $k$-th successive minima of $\mathcal{C}$ with respect to $\Lambda$.  

The proof of Theorem~\ref{thm1} is now a trivial matter if one uses Proposition~\ref{prop}.

\begin{proof}[Proof of Theorem~\ref{thm1}]
Choose $Q=d^2\delta^{-1}$. Since we required $\delta \leq d^2((n+2)Q_\alpha)^{-1}$, $Q \geq (n+2)Q_\alpha$ and so Proposition~\ref{prop} can be applied. It follows from $(ii)$ that the set
\[ \{t_1q_1\omega_1+\cdots+t_dq_d\omega_d \; | \; (t_1,\dots,t_d) \in [0,1[^d\} \]
is a fundamental domain for $\mathcal{T}_\alpha^d=F_\alpha/\Lambda_\alpha$. Hence given an arbitrary point $\theta^* \in \mathcal{T}_\alpha^d$, there is a unique $(t_1^*,\dots,t_d^*) \in [0,1[^d$ such that 
\[ \theta^*=t_1^*q_1\omega_1+\cdots+t_d^*q_d\omega_d.  \]
Now by $(i)$, for any $1 \leq j \leq d$, we have
\begin{equation}\label{est}
|t_j^*q_j\alpha-t_j^*q_j\omega_j|\leq dt_j^*Q^{-1} \leq dQ^{-1}, \quad 1 \leq q_j \leq dd!C_\alpha \Psi(2d!C_\alpha Q),
\end{equation}
so that if we set $T^*=t_1^*q_1+\cdots+t_d^*q_d$, the first inequality of~\eqref{est} gives
\[ |T^*\alpha-\theta^*|=\left|\left(\sum_{j=1}^{d}t_j^*q_j\right)\alpha-\sum_{j=1}^{d}t_j^*q_j\omega_j\right|\leq \sum_{j=1}^{d}|t_j^*q_j\alpha-t_j^*q_j\omega_j|\leq d^2Q^{-1}=\delta \]
while the second inequality of~\eqref{est} gives
\[ T^*=\sum_{j=1}^{d}t_j^*q_j \leq \sum_{j=1}^{d}q_j \leq d^2d!C_\alpha \Psi(2d!C_\alpha Q)=d^2d!C_\alpha \Psi(2d^2d!C_\alpha \delta^{-1})=C_{d,\alpha}\Psi(2C_{d,\alpha}\delta^{-1}).  \]
The result follows.   
\end{proof}

To conclude, let us examine the special case $n=2$, that is we consider the linear flow associated to $(1,\alpha) \in \R^2$, with $|\alpha| \leq 1$. By the classical processes of suspension and taking section, it is equivalent to consider the circle rotation $R_\alpha : \T \rightarrow \T$ given by $R_\alpha(x)=x+\alpha$ mod $1$. For any $0<\delta<1$, the $\delta$-ergodization time of $R_\alpha$ is the smallest natural number $N=N_\alpha(\delta)$ such that for any $x\in \T$, the finite orbit $\{x,R_{\alpha}(x), \dots, R_\alpha^N(x)\}$ is a $\delta$-dense subset of $\T$. Observe that for $\alpha\notin \Q$, $N_\alpha(\delta)$ is always well defined, while for $\alpha=p/q\in \Q^*$, $N_{p/q}(\delta)$ is well-defined if and only if $\delta\geq q^{-1}$ in which case $N_{p/q}(\delta)\leq q-1$.

Classical proofs in the special case $n=2$ are usually based on continued fractions (see \cite{DDG96} for instance), and there was a belief that the absence of a good analog of continued fractions in many dimension was an obstacle to extend the known estimate for $n=2$. We take the opportunity here to give an elementary proof in the case $n=2$ which does not use continued fractions but simply relies on the Dirichlet's box principle. For simplicity, we shall write $\Psi_\alpha=\Psi_{(1,\alpha)}$ the function defined in~\eqref{funcpsi}.

\begin{theorem}
If $\alpha \notin \Q$ and $|\alpha|\leq 1$, we have 
\[ N_\alpha(\delta)\leq [\Psi_\alpha(2\delta^{-1})]-1\] where $[\,.\,]$ denotes the integer part.
\end{theorem}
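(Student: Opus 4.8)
The plan is to prove the $n=2$ statement directly using Dirichlet's box principle, reducing everything to a one-dimensional counting argument, rather than invoking the full Proposition. Since we consider the rotation $R_\alpha(x)=x+\alpha \bmod 1$ with $\alpha\notin\Q$, the finite orbit of length $N$ is $\{x+j\alpha \bmod 1 \mid 0\le j\le N\}$, and by translation invariance I may take $x=0$. To say this orbit is $\delta$-dense means that every point of $\T$ lies within $\delta$ of some $j\alpha \bmod 1$; equivalently, the $N+1$ points $\{j\alpha\}$ for $0\le j\le N$ leave no gap of length exceeding $2\delta$ on the circle. So the quantity to control is the largest gap between consecutive points of $\{0,\{\alpha\},\{2\alpha\},\dots,\{N\alpha\}\}$, and I want to show this gap is at most $2\delta$ once $N$ reaches $[\Psi_\alpha(2\delta^{-1})]-1$.

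\smallskip

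The key step is to relate the maximal gap to the function $\Psi_\alpha$. First I would recall the definition specialized to $n=2$: writing $\alpha$ for the pair $(1,\alpha)$, for $k=(k_1,k_2)\in\Z^2$ one has $k\cdot(1,\alpha)=k_1+k_2\alpha$, and since $F_\alpha=\R^2$ here we have $\Lambda_\alpha=\Z^2$, $Q_\alpha=C_\alpha=1$. Thus
\[
\Psi_\alpha(Q)=\max\bigl\{\,|k_1+k_2\alpha|^{-1}\ \big|\ (k_1,k_2)\in\Z^2,\ 0<\max(|k_1|,|k_2|)\le Q\,\bigr\}.
\]
The central classical fact I would use is that the best rational approximations to $\alpha$ control precisely the gap structure of the sequence $\{j\alpha\}$: by the three-distance (Steinhaus) phenomenon, or more elementarily by Dirichlet's box principle, the smallest gap among $\{0,\{\alpha\},\dots,\{N\alpha\}\}$ is realized by $\|q\alpha\|=\min_{k_1}|k_1+q\alpha|$ for the denominator $q\le N$ minimizing this distance, and the \emph{largest} gap is bounded by $\|q'\alpha\|$ for a slightly smaller denominator $q'$. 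Concretely I would argue: pigeonhole the $N+1$ points into intervals to produce, for any $Q\le N$, an integer $q$ with $1\le q\le Q$ and $\|q\alpha\|\le 1/(Q+1)$, and conversely use the minimality built into $\Psi_\alpha$.

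\smallskip

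The cleanest route is to show the contrapositive-style estimate: if the largest gap exceeds $2\delta$, then I can produce a nonzero $k\in\Z^2$ with $|k|\le N+1$ and $|k_1+k_2\alpha|<2\delta$, which via the definition of $\Psi_\alpha$ forces $\Psi_\alpha(N+1)>\tfrac12(2\delta)^{-1}$, i.e. $N+1>\Psi_\alpha(2\delta^{-1})$, contradicting $N\le[\Psi_\alpha(2\delta^{-1})]-1$. To build that vector I would take two points $j\alpha,\ell\alpha$ (with $0\le \ell<j\le N$) that are consecutive on the circle and bound the gap between them by the quantity $\|(j-\ell)\alpha\|$; setting $k_2=j-\ell$ and choosing $k_1$ to be the nearest integer to $-k_2\alpha$, one gets $|k|\le N$ and $|k_1+k_2\alpha|=\|(j-\ell)\alpha\|$, and the gap being large translates into $\|(j-\ell)\alpha\|$ being small in the appropriate direction. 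Here the subtlety I expect to be the main obstacle is the precise bookkeeping relating \emph{the maximal gap} to a \emph{single} value $\|(j-\ell)\alpha\|$: consecutive orbit points need not differ by a fixed $q$, so I must use the three-distance structure (there are at most three distinct gap lengths, generated by two consecutive best-approximation denominators) to guarantee that the largest gap equals $\|q\alpha\|$ for some $q\le N$, and then track the factor of $2$ between $\delta$-density (gaps $\le 2\delta$) and the argument $2\delta^{-1}$ of $\Psi_\alpha$. Getting the constant exactly right — so that the bound reads $[\Psi_\alpha(2\delta^{-1})]-1$ rather than something weaker — is where the care is needed, and Dirichlet's box principle is what supplies the matching existence statement in the converse direction.
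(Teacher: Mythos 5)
Your overall strategy (control the largest gap of $\{0,\{\alpha\},\dots,\{N\alpha\}\}$ and convert it into a statement about $\Psi_\alpha$) is a genuinely different route from the paper's, but as written the key step is set up backwards and the argument does not close. First, the implication ``largest gap $>2\delta$ $\Rightarrow$ there is $k\neq 0$ with $|k|\le N+1$ and $|k_1+k_2\alpha|<2\delta$'' is vacuous: Dirichlet's box principle produces such a $k$ for \emph{every} $N$ with $N+1\ge (2\delta)^{-1}$, whatever the gap structure, so no contradiction can come out of it. A large gap encodes the opposite phenomenon: the orbit is clustering near a periodic orbit of \emph{small} period, i.e.\ (writing $\|q\alpha\|$ for the distance from $q\alpha$ to $\Z$) there is a $q$ with $q<2\delta^{-1}$ (small norm) and $\|q\alpha\|<(N+1)^{-1}$ (very small value). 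That is the vector you actually need, because it yields $\Psi_\alpha(2\delta^{-1})\ge\|q\alpha\|^{-1}>N+1$ \emph{directly}. Second, the inference ``$\Psi_\alpha(N+1)>\tfrac12(2\delta)^{-1}$, i.e.\ $N+1>\Psi_\alpha(2\delta^{-1})$'' is a non sequitur: $\Psi_\alpha$ is a nondecreasing step function and is in no useful sense its own inverse; both $\Psi_\alpha(N+1)>\tfrac{1}{4\delta}$ and $N+1\le\Psi_\alpha(2\delta^{-1})$ can hold simultaneously (take $\alpha$ with a huge partial quotient). Finally, extracting the correct pair of bounds from ``largest gap $>2\delta$'' is exactly the content of the three-distance theorem (the largest gap is at most $\|q_{k-1}\alpha\|+\|q_k\alpha\|$ for consecutive convergent denominators with $q_k\le N+1<q_{k+1}$); this is the heart of your approach and you have deferred it as bookkeeping, but it is not.

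For comparison, the paper avoids gap counting entirely: it applies Dirichlet with $Q=\Psi_\alpha(2\delta^{-1})$ to get a reduced fraction $p/q$ with $q\le Q$ and $|q\alpha-p|\le Q^{-1}$, uses the definition of $\Psi_\alpha$ to see that such a $q$ must satisfy $q\ge 2\delta^{-1}$, and then observes that the orbit of $R_\alpha$ up to time $q-1$ stays within $\delta/2$ of the $q$ equally spaced points of the periodic orbit of $R_{p/q}$, which are $\delta/2$-dense. If you want to salvage your version you must prove the three-distance input and handle the integer-part edge cases; otherwise the shadowing argument is shorter and keeps the constant exact.
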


\begin{proof}
Recall that by Dirichlet's box principle, given any $Q\geq 1$, there exists $(q,p)\in \N^* \times \Z$ relatively prime such that
\[ |q\alpha-p|\leq Q^{-1}, \quad 1 \leq q \leq Q. \]
Apply this with $Q=\Psi_\alpha(2\delta^{-1})$: there exists $(q,p)\in \N^* \times \Z$ relatively prime such that 
\[ |q\alpha-p|\leq \Psi_\alpha(2\delta^{-1})^{-1}, \quad 1 \leq q \leq \Psi_\alpha(2\delta^{-1}). \]
From the second estimate, $q \leq [\Psi_\alpha(2\delta^{-1})]$, so it is enough to show that $N_\alpha(\delta)\leq q-1$. From the first estimate above, the definition of $\Psi_\alpha$ and the fact that $\max\{|q|,|p|\}=q$ (as $|\alpha|\leq 1$), we have $\Psi_\alpha(q) \geq \Psi_\alpha(2\delta^{-1})$, hence $q\geq 2\delta^{-1}$, $\delta/2 \geq q^{-1}$ and so $N_{p/q}(\delta/2)\leq q-1$. Using the first estimate again and the fact that $\Psi_\alpha(2\delta^{-1})^{-1}\leq \delta/2$, it is easy to see that the distance between $\{x,R_{\alpha}(x), \dots, R_\alpha^{q-1}(x)\}$ and $\{x,R_{p/q}(x), \dots, R_{p/q}^{q-1}(x)\}$ is at most $\delta/2$. The latter set being $\delta/2$-dense, the former is $\delta$-dense, hence $N_\alpha(\delta) \leq q-1$ and this ends the proof.  
\end{proof}

\addcontentsline{toc}{section}{References}
\bibliographystyle{amsalpha}
\bibliography{Ergod}

\providecommand{\bysame}{\leavevmode\hbox to3em{\hrulefill}\thinspace}
\providecommand{\MR}{\relax\ifhmode\unskip\space\fi MR }
% \MRhref is called by the amsart/book/proc definition of \MR.
\providecommand{\MRhref}[2]{%
  \href{http://www.ams.org/mathscinet-getitem?mr=#1}{#2}
}
\providecommand{\href}[2]{#2}
\begin{thebibliography}{BGW98}

\bibitem[BBB03]{BBB03}
M.~Berti, L.~Biasco, and P.~Bolle, \emph{Drift in phase space: a new
  variational mechanism with optimal diffusion time}, J. Math. Pures Appl. (9)
  \textbf{82} (2003), no.~6, 613--664.

\bibitem[BF13]{BF13}
A.~Bounemoura and S.~Fischler, \emph{A diophantine duality applied to the {KAM}
  and {N}ekhoroshev theorems}, Math. Z. \textbf{275} (2013), no.~3, 1135--1167.

\bibitem[BGW98]{BGW98}
J.~Bourgain, F.~Golse, and B.~Wennberg, \emph{On the distribution of free path
  lengths for the periodic {L}orentz gas}, Comm. Math. Phys. \textbf{190}
  (1998), no.~3, 491--508.

\bibitem[Cas59]{Cas}
J.W.S. Cassels, \emph{An introduction to the geometry of numbers}, Grundlehren
  der Math. Wiss., no.~99, Springer, 1959.

\bibitem[CG94]{CG94}
L.~Chierchia and G.~Gallavotti, \emph{Drift and diffusion in phase space}, Ann.
  Inst. Henri Poincaré, Phys. Théor. \textbf{60} (1994), no.~1, 1--144.

\bibitem[DDG96]{DDG96}
H.~S. Dumas, L.~Dumas, and F.~Golse, \emph{On the mean free path for a periodic
  array of spherical obstacles}, J. Statist. Phys. \textbf{82} (1996), no.~5-6,
  1385--1407.

\bibitem[Dum91]{Dum91}
H.~S. Dumas, \emph{Ergodization rates for linear flow on the torus}, J. Dynam.
  Differential Equations \textbf{3} (1991), no.~4, 593--610.

\bibitem[Dum99]{Dum99}
\bysame, \emph{Filling rates for linear flow on the torus: recent progress and
  applications}, Hamiltonian systems with three or more degrees of freedom
  ({S}'{A}gar\'o, 1995), NATO Adv. Sci. Inst. Ser. C Math. Phys. Sci., vol.
  533, Kluwer Acad. Publ., Dordrecht, 1999, pp.~335--339.

\end{thebibliography}

\end{document}